\def\thm@space@setup{
\thm@preskip=2mm
\thm@postskip=0mm
}
\let\leq\leqslant
\let\geq\geqslant
\let\epsilon\varies
\colorlet{best}{red!70!blue!110!violet!110!white!100}
\definecolor{myblue1}{RGB}{30, 144, 255}
\definecolor{myblue2}{RGB}{0, 135, 245}
\definecolor{myblue3}{RGB}{0, 126, 235}
\definecolor{myblue4}{RGB}{36, 70, 119}
\newtheorem{theorem}{Theorem}
\newtheorem*{theorem*}{Theorem}
\newtheorem{lemma}[theorem]{Lemma}
\newtheorem{example}[theorem]{Example}
\theoremstyle{definition}
\begin{document}

\title{On triangle-free graphs maximizing embeddings of bipartite graphs\thanks{This work was supported by the National Science Centre grant 2021/42/E/ST1/00193.}}

\author{Dmitriy Gorovoy\thanks{Faculty of Mathematics and Computer Science, Jagiellonian University, {\L}ojasiewicza 6, 30-348 Krak\'{o}w, Poland. E-mail: {\tt dimgor2003@gmail.com}.} \and
Andrzej Grzesik\thanks{Faculty of Mathematics and Computer Science, Jagiellonian University, {\L}ojasiewicza 6, 30-348 Krak\'{o}w, Poland. E-mail: {\tt Andrzej.Grzesik@uj.edu.pl}.} \and 
Justyna Jaworska\thanks{Faculty of Mathematics and Computer Science, Jagiellonian University, {\L}ojasiewicza 6, 30-348 Krak\'{o}w, Poland. E-mail: {\tt justynajoanna.jaworska@student.uj.edu.pl}.}}

\date{}

\maketitle
 	
\begin{abstract}
In 1991 Gy\H ori, Pach, and Simonovits proved that for any bipartite graph $H$ containing a matching avoiding at most 1 vertex, the maximum number of copies of $H$ in any large enough triangle-free graph is achieved in a balanced complete bipartite graph.
In this paper we improve their result by showing that if $H$ is a bipartite graph containing a matching of size $x$ and at most $\frac{1}{2}\sqrt{x-1}$ unmatched vertices, then the maximum number of copies of~$H$ in any large enough triangle-free graph is achieved in a complete bipartite graph.
We also prove that such a statement cannot hold if the number of unmatched vertices is $\Omega(x)$. 
\end{abstract}
	
\section{Introduction}

A classical theorem of Tur\'an \cite{turan_thm} states that the unique $K_r$-free graph on $n$ vertices with the maximum number of edges is the balanced complete $(r-1)$-partite graph, denoted by $T_{r-1}(n)$. 
This result was further generalized by Zykov \cite{zykov_cliques} (and independently by Erd\H os \cite{erdos1962}), who proved that among $K_r$-free $n$-vertex graphs, also $T_{r-1}(n)$ maximizes the number of copies of any complete graph $K_s$ for $s < r$. 
In general, the maximum number of copies of a given graph~$H$ among all $K_r$-free $n$-vertex graphs (for $r>\chi(H)$) is not always achieved in $T_{r-1}(n)$.
For example if $H$ is a star on 4 vertices and $r=3$.
Nevertheless, recently, Morrison, Nir, Norin, Rz{\k{a}}{\.z}ewski and Wesolek~\cite{morrison2023}, answering a conjecture of Gerbner and Palmer \cite{gerbner2022}, showed that for any graph~$H$ the maximum number of copies of $H$ in a large enough $K_{r}$-free $n$-vertex graph is obtained in $T_{r-1}(n)$ as long as $r$ is large enough. 

A natural generalization of the previously mentioned results is to search for sufficient conditions for the maximum number of copies of a given graph~$H$ in a $K_r$-free $G$ to be maximized when $G$ is some (not necessarily balanced) complete $(r-1)$-partite graph. 
Note that an easy application of the graph removal lemma implies that if $\chi(H) < \chi(F)$ then the maximum number of copies of $H$ in $F$-free graphs is asymptotically the same as in $K_{\chi(F)}$-graphs, so solving the problem for forbidden complete graph is essentially solving it in the more general case as well. 

One necessary condition to have the maximum number of copies of $H$ achieved in a complete $(r-1)$-partite graph is $\chi(H) < r$. However, this is not sufficient as shown by the following example. 

\begin{example}[Gy{\H o}ri, Pach, Simonovits \cite{gps}] \label{example: gps}
Let $H$ be a bipartite graph on $2k$ vertices formed by two disjoint stars $K_{1, k - 2}$ with centers connected by a path of length 3. The number of copies of $H$ in any $n$-vertex bipartite graph is maximized in $T_2(n)$, since $H$ has the same number of vertices in both color classes. However, for sufficiently large $k$ and $n$, the number of copies of $H$ in $T_2(n)$ is significantly smaller than in a blow-up of a five-cycle with blobs of sizes $\frac{n}{2k}, \frac{n}{2k}, \frac{n}{2k}, \frac{n}{2k}$, and $n - \frac{2n}{k}$.
\end{example}

For a similar example for higher values of $r$, see \cite{counter_example}.

In the most interesting case $r = 3$, asking when the maximum number of copies of a given bipartite graph in triangle-free graphs is achieved in a complete bipartite graph, Gy{\H o}ri, Pach, and Simonovits proved the following sufficient condition.

\begin{theorem}[Gy{\H o}ri, Pach, and Simonovits \cite{gps}]
Let $H$ be a bipartite graph on $m$ vertices containing a matching of size $\lfloor \frac{m}{2} \rfloor$. Then, for $n > m$, $T_2(n)$ is the unique $n$-vertex triangle-free graph maximizing the number of copies of $H$.
\end{theorem}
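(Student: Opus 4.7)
The plan is to reduce an extremal triangle-free graph $G$ to a blow-up of a small triangle-free ``base'' graph via Zykov-style symmetrization, then exploit the matching condition on $H$ to force the base to be $K_2$, and finally optimize the part sizes to land on $T_2(n)$.

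First, I would symmetrize. For any non-adjacent pair $u,v$ in $G$, consider the transformation that replaces $N(v)$ with a fresh copy of $N(u)$ (keeping $v$ itself). This preserves triangle-freeness: a new triangle through $v$ would require two adjacent vertices inside $N(u)$, contradicting triangle-freeness of $G$. Comparing the numbers of $H$-copies using $u$ versus $v$ and always symmetrizing towards the larger, iteration drives any extremal $G$ to a blow-up $F[n_1,\ldots,n_t]$ of some triangle-free graph $F$ with $\sum_i n_i = n$. A standard counting formula then yields
\[
|\mathrm{Aut}(H)| \cdot N(H, F[n_1,\ldots,n_t]) \;=\; \sum_{\phi \colon H \to F} \prod_{i=1}^{t} (n_i)_{|\phi^{-1}(i)|},
\]
where the sum runs over graph homomorphisms $\phi$ and $(n)_k$ denotes a falling factorial.

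Second, I would argue $F$ must be $K_2$. Any blow-up of a bipartite base $F$ is a spanning subgraph of the corresponding $K_2$-blow-up obtained by merging the two color classes of $F$, hence cannot contain more copies of $H$; so the only relevant competitors to $K_2$ are non-bipartite triangle-free graphs, like $C_5$, $C_7$, or the Petersen graph. Here the matching hypothesis becomes decisive: it forces the bipartition of $H$ to have sizes $\lfloor m/2 \rfloor$ and $\lceil m/2 \rceil$, so $H$ is near-balanced, and it rigidly constrains how the vertices of $H$ can be distributed across the blobs $V_i$ under any homomorphism $\phi$ to a non-bipartite $F$. I would use this rigidity to show that $\sum_\phi \prod_i (n_i)_{|\phi^{-1}(i)|}$ is strictly dominated by the $K_2$-count, in contrast to the phenomenon illustrated by Example~\ref{example: gps}.

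Finally, once $F = K_2$, the number of $H$-copies in $K_{n_1, n_2}$ reduces (for connected $H$; disconnected $H$ is analogous with more homomorphisms) to
\[
|\mathrm{Aut}(H)| \cdot N(H, K_{n_1, n_2}) \;=\; (n_1)_a (n_2)_b + (n_2)_a (n_1)_b,
\]
with $a = \lfloor m/2 \rfloor$ and $b = \lceil m/2 \rceil$. A short convexity calculation using $|a - b| \leq 1$ then shows this expression is strictly maximized at the balanced split $\{n_1, n_2\} = \{\lfloor n/2 \rfloor, \lceil n/2 \rceil\}$, pinning down $T_2(n)$ as the unique extremum. The main obstacle will be the second step: handling \emph{all} non-bipartite triangle-free base graphs in a uniform manner and showing that the matching condition on $H$ prevents any of them from beating $K_2$.
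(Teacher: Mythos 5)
This statement is quoted in the paper from \cite{gps} without proof, so there is no in-paper argument to compare against; the paper's own machinery (Lemma~\ref{le: h_u,v} together with the Andr\'asfai--Erd\H{o}s--S\'os theorem) is used only for Theorem~\ref{th: quadratic} and inherently requires $n$ to be sufficiently large, whereas the statement here is exact and claims uniqueness for every $n>m$. Judged on its own, your proposal has a genuine gap, and it sits exactly where you say it does: step two is the entire theorem, and you only assert it. The ``rigidity'' coming from the matching hypothesis does tell you that every bipartition of $H$ is near-balanced, but it does not by itself control $\sum_\phi \prod_i (n_i)_{|\phi^{-1}(i)|}$ over homomorphisms into an arbitrary non-bipartite triangle-free base $F$ --- and there are infinitely many such $F$, of unbounded chromatic number, so there is no finite case analysis to fall back on. Example~\ref{example: gps} shows the comparison with the $C_5$-blow-up is genuinely delicate (it flips as soon as the matching misses two vertices), so any proof must use the matching quantitatively, e.g.\ by embedding $H$ edge-by-edge along the matching and invoking $|E(G)|\le n^2/4$; nothing in your outline performs that count.

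The first step is also not sound as stated. Replacing $N(v)$ by $N(u)$ does preserve triangle-freeness, but it does not monotonically preserve the number of copies of $H$: the copies destroyed are those through $v$, the copies gained are those through the clone of $u$ that avoid $v$, so the net change is $h(u,\bar v)-h(v)$, which can be negative even when $h(u)\ge h(v)$ because copies containing both $u$ and $v$ are not recreated. This is precisely why the paper's Lemma~\ref{le: h_u,v} only yields $h(v)\le h(u)+O(n^{m-2})$ rather than an exact inequality; an error term of that order is fatal for a statement that must hold exactly, with a uniqueness claim, for every $n>m$. You would additionally need an argument that the symmetrization process terminates in a blow-up and that \emph{every} extremal graph (not just some) is reached, which Zykov-style arguments do not give for free when counting copies of a general $H$. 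The final convexity step for $K_{n_1,n_2}$ is fine, but it is the only part of the plan that is actually a proof.
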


Intuitively, if $H$ contains a large matching, then $T_2(n)$ is the best choice for a triangle-free maximizer, because it contains the biggest number of copies of the matching itself. The condition that $H$ must have a perfect matching (or an almost perfect matching if $2\nmid m$) cannot be relaxed to a matching on $m-2$ vertices if we want to have $T_2(n)$ as the maximizer. For example the maximum number of copies of a star $K_{1,3}$ in triangle-free graphs is not achieved in $T_2(n)$, but in a non-balanced complete bipartite graph. 

We show that the maximizer is a complete bipartite graph even using a far weaker condition on the size of a matching in $H$.

\begin{theorem} \label{th: quadratic}
Let $H$ be a bipartite graph containing a matching of size $x$ and at most $\frac{1}{2} \sqrt{x-1}$ unmatched vertices. Then, for $n$ sufficiently large, a complete bipartite graph maximizes the number of copies of $H$ among all triangle-free $n$-vertex graphs.
\end{theorem}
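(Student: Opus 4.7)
The plan is to show that any triangle-free $n$-vertex graph $G$ maximizing $N(H,G)$ must be a complete bipartite graph, and I will argue this in three stages: a stability stage forcing $G$ close to bipartite in edit distance, an exactness stage concluding that $G$ is actually bipartite, and a short completion stage. The completion stage is straightforward: if $G$ is bipartite with parts $A,B$, then since $H$ is bipartite every embedding of $H$ places one color class of each connected component of $H$ in $A$ and the other in $B$, so adding any missing edge between $A$ and $B$ preserves triangle-freeness and does not decrease $N(H,G)$. By maximality $G=K_{|A|,|B|}$; the optimal split is then determined by maximizing the closed-form expression for $N(H,K_{n_1,n_2})$ in the two bipartition sizes $a_H,b_H$ of $H$, which satisfy $a_H+b_H=2x+u$ with $a_H,b_H\in[x,x+u]$.

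For the stability stage I would count copies of $H$ in $G$ by first fixing an embedding of the matching $M\subseteq H$ of size $x$ and then extending the $u\le\frac{1}{2}\sqrt{x-1}$ unmatched vertices one by one. The number of embeddings of $M$ into a triangle-free graph can be bounded in terms of the number of $x$-matchings, which by a Kruskal--Katona-type argument is at most $M_x(T_2(n))(1+o(1))$; each extension contributes at most $n/2+o(n)$, since the image must lie in a common neighborhood of already placed vertices and such a neighborhood in a triangle-free graph is independent and hence small. Comparing with $N(H,T_2(n))=\Theta(n^{2x+u})$ forces any maximizer $G$ to have $(1-o(1))n^2/4$ edges, whence standard triangle-free stability (of Erd{\H o}s--Simonovits or Andr\'asfai--Erd{\H o}s--S\'os type) places $G$ within $o(n^2)$ edit distance of a complete bipartite graph.

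The main obstacle is the exactness stage. Here I plan to use a Zykov-style symmetrization: for any two non-adjacent vertices $v,w\in V(G)$, one of the operations ``replace the neighborhood of $v$ by that of $w$'' or vice versa preserves triangle-freeness (automatic, since $N(w)$ is independent in $G$) and does not decrease $N(H,G)$. Iterating reduces $G$ to a blow-up of some triangle-free reduced graph $G_0$, and the task becomes to show $G_0=K_2$. The quantitative core is a comparison between the cost and benefit of having any odd cycle in $G_0$: an odd cycle forbids $\Omega(n^{x-O(u)})$ embeddings of the matching $M$ relative to the two-blob (bipartite) structure, while it permits at most $O(n^u)$ extra embeddings of the $u$ unmatched vertices wrapping around the cycle. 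The threshold $u\le\frac{1}{2}\sqrt{x-1}$ arises precisely from equating these two quantities, and pinning down this delicate accounting --- where the matching loss must dominate the extension gain --- is the technical heart of the improvement over Gy{\H o}ri, Pach, and Simonovits.
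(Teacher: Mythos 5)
There are genuine gaps in two of your three stages, and they sit exactly where the theorem's difficulty lies. First, the symmetrization claim in your exactness stage is not justified: for general subgraph counts, it is \emph{not} true that one of the two operations ``give $v$ the neighborhood of $w$'' or vice versa never decreases $N(H,G)$. Writing $h(v,w)$ for the number of embeddings using both vertices, the change in the count when $v$ becomes a clone of $w$ is bounded below only by $h(w)-h(v)-h(v,w)$, so both directions can lose up to $h(v,w)=O(n^{m-2})$ copies. This is precisely why the paper's Lemma~\ref{le: h_u,v} performs this operation \emph{once}, on a maximizer, to extract the approximate inequality $h(v)\le h(u)+O(n^{m-2})$ --- and never iterates. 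Iterating $\Theta(n)$ such steps to reach a blow-up of a reduced graph would accumulate errors of order $n^{m-1}$, which swamps the quantities you need to compare, so the reduction to ``$G_0=K_2$ versus $G_0$ contains an odd cycle'' is not available. Second, in your stability stage the bound of ``$n/2+o(n)$ per extension'' for the unmatched vertices is false: the image of an unmatched vertex is only constrained to lie in the neighborhood of the image of one of its neighbors, and a neighborhood in a triangle-free graph, while independent, can have size up to $n-1$ (think of a star). The honest bound is $\Delta(G)$ per unmatched vertex, and the entire quantitative content of the theorem is the tension between the factor $\Delta^{m-2x}$ for the unmatched vertices and the factor $|E(G)|^{x-1}\le(\Delta(n-\Delta))^{x-1}$ for the matching. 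Your stated origin of the threshold $\tfrac12\sqrt{x-1}$ (odd-cycle cost versus wrap-around gain) is asserted rather than computed, and does not match the actual source of that bound.

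For contrast, the paper's route avoids both stability and iterated symmetrization. It compares the extremal $G$ with $T_2(n)$ to get a lower bound $h(u)\ge m2^{c-1}(n/2)^{m-1}+O(n^{m-2})$ on every $H$-degree (using the one-step replacement lemma), then upper-bounds $h(u)$ for a minimum-degree vertex by $m\delta 2^{c-1}\Delta^{m-x-1}(n-\Delta)^{x-1}$, optimizes the resulting lower bound on $\delta$ over $\Delta$, and shows that the hypothesis $m-2x\le\sqrt{x-1}$ forces $\delta>\tfrac25 n$; the Andr\'asfai--Erd\H{o}s--S\'os theorem then yields that $G$ is bipartite, after which your completion stage applies. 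Your completion stage is fine and matches the paper's implicit final step, but the two middle stages would need to be replaced by an argument of this kind (or the missing odd-cycle accounting would need to be carried out in full, which is nontrivial given that unbalanced $C_5$-blow-ups genuinely win once the number of unmatched vertices is linear in $x$, as the paper's Theorem~\ref{th: linear} shows).
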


We do not know whether the bound on the number of unmatched vertices $O(\sqrt{x})$ is optimal, but we show that it needs to be sublinear.

\begin{theorem} \label{th: linear}
For any constant $\lambda > 0$ and integer $n_0 > 0$, there exist an integer $x$,
\begin{itemize}[leftmargin=10pt]
\setlength{\itemsep}{0pt}
\setlength{\parskip}{0pt}
\vspace{-\topsep}
\item 
a bipartite graph $H$ containing a matching of size $x$ and at most $\lambda x$ unmatched vertices, and
\item 
a triangle-free non-bipartite graph $G$ on more than $n_0$ vertices,
\end{itemize}\vspace{-\topsep}
such that the number of copies of $H$ in $G$ is larger than in any bipartite graph on the same number of vertices.
\end{theorem}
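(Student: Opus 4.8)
The plan is to take $G$ to be a suitable blow-up of the five-cycle $C_5$, exactly the type of construction appearing in Example~\ref{example: gps}, and to assemble $H$ from two ingredients: a ``gain gadget'' that the odd cycle can exploit, together with a large plain matching that controls the matching number. Concretely, let $F_\ell$ be the Gy\H ori--Pach--Simonovits gadget consisting of two disjoint stars $K_{1,\ell}$ whose centres $c_1,c_2$ are joined by a path $c_1uvc_2$ of length $3$, and set
\[
H \;=\; F_\ell \,\sqcup\, m K_2 ,
\]
the disjoint union of $F_\ell$ with $m$ independent edges. A maximum matching of $F_\ell$ has size $3$ and leaves $2\ell-2$ vertices uncovered, so $H$ has matching number $x=m+3$ and exactly $2\ell-2$ unmatched vertices; choosing $m=\lceil (2\ell-2)/\lambda\rceil$ guarantees $2\ell-2\le \lambda x$, while forcing the ratio $m/\ell\to 2/\lambda$. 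For $G$ I take the blow-up of $C_5$ with parts $V_1,\dots,V_5$ ($V_i$ adjacent to $V_{i+1}$ modulo $5$) of sizes $|V_i|=\alpha_i n$, where $\alpha_5=\tfrac12+\delta$ is a slightly-more-than-half ``funnel'' part, $\alpha_1=\tfrac12-\delta-3\eta$, and $\alpha_2=\alpha_3=\alpha_4=\eta$ for a tiny $\eta>0$. This $G$ is triangle-free and non-bipartite, and $\delta,\eta$ will be fixed small constants depending only on $\lambda$.

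First I would record the bipartite benchmark. Since $H$ is bipartite, adding edges only increases copy counts, so the maximum over all $n$-vertex bipartite graphs is attained by some complete bipartite graph $K_{pn,(1-p)n}$; because both colour classes of $F_\ell$ have size $\ell+2$, the total count is proportional to $\bigl(p(1-p)\bigr)^{\ell+2+m}$ and is therefore maximised at $p=\tfrac12$. Thus the bipartite optimum is the balanced complete bipartite graph, with leading coefficient (in $n^{|V(H)|}$) equal to $2\cdot 4^{-(\ell+2)}\cdot 2^{-m}$. On the $C_5$ side I need only a lower bound: place all $2\ell$ leaves of $F_\ell$ in the funnel part $V_5$, the centres and path vertices in $V_4,V_3,V_2,V_1$ (so that $c_1,c_2$ land in the two neighbours of $V_5$), and each matching edge across the large adjacent block $V_1$--$V_5$. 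This yields a leading coefficient at least $\alpha_1\alpha_2\alpha_3\alpha_4\,\alpha_5^{2\ell}\,(2\alpha_1\alpha_5)^{m}$.

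Comparing the two leading coefficients, the ratio (copies in $G$)/(copies in the best bipartite graph) is, up to a constant $c=c(\delta,\eta)=8\alpha_1\eta^3>0$ that is subexponential in $\ell$,
\[
\bigl(4\alpha_5^{2}\bigr)^{\ell}\bigl(4\alpha_1\alpha_5\bigr)^{m}
\;\xrightarrow{\ \eta\to0\ }\;
\bigl((1+2\delta)^2\bigr)^{\ell}\bigl(1-4\delta^2\bigr)^{m}.
\]
The two opposing effects are transparent: funnelling the leaves of \emph{both} colour classes into the single part $V_5$ --- impossible in a bipartite host, where the classes are forced to opposite sides --- gains a factor $4\alpha_5^2>1$ per pair of leaves, while each matching edge pays a penalty $4\alpha_1\alpha_5<1$ because the balanced bipartite graph is edge-optimal. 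Taking logarithms and using $m\sim 2\ell/\lambda$, the exponent per unit of $\ell$ tends to
\[
2\ln(1+2\delta)+\tfrac{2}{\lambda}\ln(1-4\delta^2)
\;=\;4\delta-\Bigl(4+\tfrac{8}{\lambda}\Bigr)\delta^2+O(\delta^3).
\]
The decisive point is that the gain is \emph{linear} in $\delta$ whereas the per-edge loss is only \emph{quadratic} in $\delta$; hence for every fixed $\lambda>0$ one may pick $\delta$ small enough (for instance $\delta<\lambda/(\lambda+2)$), and then $\eta$ small enough, to make this exponent positive. The ratio then grows exponentially in $\ell$, so for all sufficiently large $\ell$, and subsequently all sufficiently large $n>n_0$, the graph $G$ contains more copies of $H$ than any $n$-vertex bipartite graph.

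I expect the main work to be exactly this trade-off estimate: verifying that the first-order funnel gain can be forced to dominate the second-order matching penalty for arbitrarily small $\lambda$. Everything else is routine bookkeeping that I would check in order: that the described placement is a valid homomorphism into the blow-up; that for fixed $H$ and $n\to\infty$ the number of injective copies agrees with these homomorphism counts up to lower-order terms and a fixed $|\mathrm{Aut}(H)|$ factor common to both hosts, so that comparing leading coefficients in $n$ is legitimate; that the part sizes $\alpha_i n$ are realisable as integers; and that the bipartite maximiser is indeed the balanced complete bipartite graph. Finally, setting $x=m+3$ one reads off that $H$ has a matching of size $x$ and at most $\lambda x$ unmatched vertices, which completes the construction.
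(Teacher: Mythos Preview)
Your proof is correct and follows the same overall strategy as the paper --- a $C_5$ blow-up with one large ``funnel'' part hosting the star leaves --- but the construction of $H$ is genuinely different. The paper keeps $H$ connected: it attaches $x-3$ pendant paths of length~$2$ to the central vertex of the Gy\H ori--Pach--Simonovits gadget, so the extra matching edges are wired into the gadget itself and are embedded along three consecutive blobs of the blow-up. You instead take $H=F_\ell\sqcup mK_2$, separating the gain gadget from a loose matching and routing those edges across the two largest adjacent blobs. Analytically the arguments converge: the paper shows $a^{\lambda+1}(1-a)>(1/2)^{\lambda+2}$ for some $a>\tfrac12$ by noting the derivative at $a=\tfrac12$ is positive, while your Taylor expansion $4\delta-(4+8/\lambda)\delta^2+O(\delta^3)$ isolates exactly the same first-order versus second-order trade-off. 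Your modular decomposition makes the linear-gain/quadratic-loss mechanism more transparent and handles irrational $\lambda$ without the preliminary reduction to rationals; the paper's version has the minor advantage that the counterexample $H$ can be taken connected, which is not required by the theorem but is a slightly stronger conclusion.
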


\section{Proofs}

We start with introducing the needed notation and some preliminary results. 

By the number of copies of $H$ in $G$, denoted by $H(G)$, we mean the number of subgraphs of~$G$ isomorphic to $H$. For easier calculations we consider injective embeddings of $H$ in $G$, i.e., injective functions $\varphi: V(H) \rightarrow V(G)$ such that $\varphi(v_1)\varphi(v_2) \in E(G)$ if $v_1v_2 \in E(H)$. We denote the number of different injective embeddings of $H$ in $G$ by $\overline{H(G)}$. Observe that $\overline{H(G)}$ differs from $H(G)$ just by the number of authomorpisms of $H$, so maximizing $\overline{H(G)}$ is equivalent to maximizing $H(G)$.

For two graphs $H$ and $G$ we define the \emph{$H$-degree} of a vertex $v \in V(G)$, denoted $h(v)$, as the number of injective embeddings of $H$ in $G$ whose image contains $v$. 
Analogously, for $u, v \in V(G)$ we define $h(u,v)$ as the number of injective embeddings whose image contains vertices $u$ and $v$, and $h(u, \bar{v})$ as the number of injective embeddings whose image contains vertex $u$ and does not contain vertex $v$.

\begin{lemma} \label{le: h_u,v}
For an $m$-vertex graph $H$ let $G$ be a triangle-free $n$-vertex graph that maximizes the number of injective embeddings of $H$. Then for any two vertices $u,v \in V(G)$ it holds $h(v) \leq h(u)+O\left(n^{m-2}\right)$.
\end{lemma}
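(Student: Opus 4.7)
The plan is to use a local symmetrization argument that exploits the maximality of $G$. I would define a modified graph $G^{*}$ by deleting every edge of $G$ incident to $u$ and then, for each $x \in N_G(v)\setminus\{u\}$, adding the edge $ux$. So in $G^{*}$ one has $N_{G^{*}}(u)=N_G(v)\setminus\{u\}$, while all adjacencies not involving $u$ are inherited from $G$. Intuitively, $u$ becomes a ``clone'' of $v$ in $G^{*}$, and any embedding of $H$ that uses $v$ in $G$ can be transported to an embedding using $u$ in $G^{*}$ by relabeling a single vertex of the image.

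First I would check that $G^{*}$ is still triangle-free: any triangle in $G^{*}$ must use $u$ (triangles disjoint from $u$ already lie in $G$), and then its other two vertices would lie in $N_{G^{*}}(u)\subseteq N_G(v)$, which is independent in $G$ because $G$ is triangle-free; this is a contradiction. Since $G^{*}-u=G-u$, I obtain
$$\overline{H(G^{*})}-\overline{H(G)}=h_{G^{*}}(u)-h(u),$$
where $h_{G^{*}}(u)$ is the $H$-degree of $u$ in $G^{*}$. The core step is a lower bound $h_{G^{*}}(u)\ge h(v)-O(n^{m-2})$. For each $w\in V(H)$ and each injective $\varphi\colon V(H)\to V(G)$ with $\varphi(w)=v$ and $u\notin\varphi(V(H))$, I would define $\varphi'$ by setting $\varphi'(w)=u$ and $\varphi'(z)=\varphi(z)$ for $z\ne w$. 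Injectivity of $\varphi'$ follows from $u\notin\operatorname{image}(\varphi)$; edges of $H$ incident to $w$ are preserved because $N_H(w)$ is sent into $N_G(v)\setminus\{u\}=N_{G^{*}}(u)$; and edges of $H$ not incident to $w$ are mapped to edges of $G$ avoiding $u$, which are unchanged in $G^{*}$. Hence this is a well-defined injection from the relevant embeddings into $\varphi'\in E_{G^{*}}(u,w)$, and summing over $w$ yields the claimed lower bound up to the number of embeddings of $H$ in $G$ that use both $u$ and $v$.

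The number of such embeddings is trivially at most $m(m-1)n^{m-2}$: fix which two vertices of $V(H)$ map to $u$ and $v$, and then place the remaining $m-2$ vertices arbitrarily. Since $m$ is a constant, this error is $O(n^{m-2})$. Combining with maximality of $G$, which gives $\overline{H(G)}\ge\overline{H(G^{*})}$ and hence $h(u)\ge h_{G^{*}}(u)$, I conclude $h(u)\ge h(v)-O(n^{m-2})$, which is the desired inequality.

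The only delicate point I anticipate is making sure the surgery does not silently destroy an edge of $\varphi$ away from the vertex $w$; this is exactly why the injection is restricted to embeddings that avoid $u$, so that edges of the form $\varphi(z)\varphi(z')$ with $z,z'\ne w$ automatically do not touch $u$ and therefore persist in $G^{*}$. The cost of this restriction is the error term $O(n^{m-2})$, which is precisely the slack the lemma allows.
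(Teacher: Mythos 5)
Your proposal is correct and is essentially the paper's own argument: the paper deletes $u$ and adds a non-adjacent copy of $v$, which produces exactly your graph $G^{*}$ up to renaming the new vertex, and then compares the loss $h(u)$ with the gain $h(v,\bar{u})=h(v)-h(u,v)\ge h(v)-O(n^{m-2})$ using maximality of $G$. Your write-up just makes the transported-embedding injection and the triangle-freeness check more explicit.
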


\begin{proof}
Modify the graph $G$ by deleting $u$ and adding  instead a copy of $v$ (not adjacent to~$v$). The obtained graph remains triangle-free after such modification. In this process we lose $h(u)$ injective embeddings and gain $h(v,\bar{u})$ new ones. Since $h(v)=h(v,u)+h(v, \bar{u})$, we get $h(v, \bar{u})-h(u) =h(v)-h(u,v)-h(u) \leq 0$, so $h(v) \leq h(u) + h(u, v) = h(u)+O\left(n^{m-2}\right)$. 
\end{proof}

\begin{lemma} \label{le: max edges}
Let $G$ be a triangle-free graph on $n$ vertices with the maximum degree $\Delta$. Then $|E(G)| \leq \Delta(n- \Delta)$.
\end{lemma}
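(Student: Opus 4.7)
The plan is to exploit the defining feature of a triangle-free graph: the neighborhood of any vertex is an independent set. Take a vertex $v\in V(G)$ of maximum degree $\Delta$, set $A := N(v)$ (so $|A|=\Delta$) and $S := V(G)\setminus A$ (so $|S|=n-\Delta$). Triangle-freeness forces $A$ to be independent, because any edge inside $A$ together with $v$ would form a triangle.

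The point of this decomposition is that \emph{every} edge of $G$ must have at least one endpoint in $S$: an edge with both endpoints in $A$ is ruled out by independence. Hence each edge is counted at least once in $\sum_{u\in S} d(u)$, which gives
\[
|E(G)| \;\leq\; \sum_{u\in S} d(u) \;\leq\; |S|\cdot\Delta \;=\; (n-\Delta)\Delta,
\]
using the maximum-degree bound for the second inequality. This is precisely the claim.

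I do not foresee any real obstacle here; the lemma is essentially immediate once one writes down the right partition of the vertex set. The only small thing to double-check is that the inequality is genuine rather than off by a factor of $2$: edges between $A$ and $S$ contribute once to $\sum_{u\in S} d(u)$ while edges inside $S$ contribute twice, so the sum is in fact an upper bound on $|E(G)|$, as needed.
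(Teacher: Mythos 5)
Your proof is correct and follows essentially the same route as the paper: take a maximum-degree vertex, note its neighborhood $A$ is independent by triangle-freeness, and bound $|E(G)|$ by summing degrees over the $n-\Delta$ vertices outside $A$. The only difference is that you spell out the double-counting step explicitly, which the paper leaves implicit.
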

\begin{proof}
    Consider a vertex of degree $\Delta$ and let $A$ be the set of its neighbors. The set $V(G)\setminus A$ contains $n-\Delta$ vertices of degree at most $\Delta$. Moreover, from triangle-freeness of $G$ there are no edges between vertices in $A$. Thus, $|E(G)| \leq \Delta(n-\Delta)$.

    Note that the equality holds only for the complete bipartite graph $K_{\Delta, n-\Delta}$.
\end{proof}

%\begin{theorem}[Andr{\'a}sfai-Erd{\H o}s-S{\'o}s \cite{andrasfai-erdos-sos}]\label{thm:aes}
%Every $n$-vertex triangle-free graph with minimum degree greater than $\frac{2}{5}n$ is bipartite.
%\end{theorem}

We are ready to prove the main theorem.

\begin{proof}[Proof of Theorem \ref{th: quadratic}]
If $H$ contains an isolated vertex, then the graph $H'$ obtained by removing it satisfies the assumptions of Theorem~\ref{th: quadratic} (with a smaller number of unmatched vertices). Moreover, for every $n$-vertex graph $G$ we have $\overline{H(G)}=\overline{H'(G)} (n-|V(H')|)$. Thus, if the theorem holds for $H'$ then it also holds for $H$.
Therefore, we may assume that $H$ does not contain isolated vertices. 
We may also assume that the matching of size $x$ is a maximal matching in $H$ and $x \geq 2$. 

Let $m$ be the number of vertices in $H$ and $c$ be the number of connected components of $H$. 
For a sufficiently large~$n$ let $G$ be a triangle-free graph on $n$ vertices that has the largest number of copies of $H$. 
By summing up the $H$-degrees of all vertices in $G$ we count each injective embedding exactly $m$ times, so 
\[\sum_{v \in V(G)} h(v)=m \cdot \overline{H(G)}\geq m \cdot\overline{H(T_2(n))} = m \cdot 2^c \left(\frac{n}{2}\right)^{m} + O\left(n^{m-1}\right).\]
The last equality holds because we can count the number of injective embeddings of $H$ in~$T_2(n)$ by embedding each vertex connected to already embedded vertices in $n/2$ ways and a vertex in a new component of $H$ in $n$ ways. 
The lower order error term comes from the possibility of selecting the same vertex multiple times. 

For any vertex $u \in V(G)$ by summing up the inequality from Lemma~\ref{le: h_u,v} for each vertex $v \in V(G)$, and combining it with the above bound, we obtain 
\[n \cdot h(u) \geq \sum_{v \in V(G)} h(v) + O(n^{m-1}) \geq m \cdot 2^{c} \left(\frac{n}{2}\right)^{m} + O\left(n^{m-1}\right).\]
Therefore, it holds
\begin{equation}\label{eq:lower}
h(u) \geq m 2^{c-1}\left(\frac{n}{2}\right)^{m-1} + O\left(n^{m-2}\right).
\end{equation}

Our plan now is to upper-bound $h(u)$ for a vertex $u$ of the minimum degree $\delta$ in $G$. 
Consider an arbitrary vertex $w \in V(H)$.
We estimate the number of injective embeddings that map $w$ to~$u$ in the following way.
As $H$ does not contain isolated vertices and the matching of size $x$ is maximal, $w$ is adjacent to some matched vertex. 
It implies that there exists a matching $M$ in~$H$ of size $x$, which contains $w$. 
Thus, we have $\delta$ ways to embed in $G$ the neighbor of $w$ in the matching $M$. Then, we have at most $|E(G)|$ ways to embed in $G$ any edge of $M$ from the same component as already embedded vertices, and at most $2|E(G)|$ ways for each edge in a new component. Finally, we have at most $\Delta^{m-2x}$ ways to embed all vertices of $H$ not belonging to~$M$, where $\Delta$ is the maximum degree of $G$.
Therefore, taking into account that we can choose $w$ in $V(H)$ in $m$~ways and applying Lemma \ref{le: max edges} to bound the number of edges of $G$, we obtain
\begin{equation}\label{eq: upper}
 h(u)\leq m \delta 2^{c-1}|E(G)|^{x-1} \Delta^{m-2x}\leq m \delta  2^{c-1} \Delta^{m-x-1}(n-\Delta)^{x-1}. 
\end{equation}

By combining inequalities (\ref{eq:lower}) and (\ref{eq: upper}) we conclude the following bound for $\delta$
\begin{equation}\label{eq: delta}
 \delta \geq \frac{(\frac{n}{2})^{m-1} }{\Delta^{m-x-1}(n-\Delta)^{x-1}} + O(1).
\end{equation}

Our goal is to show that under the assumptions of Theorem~\ref{th: quadratic} from inequality (\ref{eq: delta}) we derive that $\delta > \frac{2}{5}n$. Then, since Andr{\'a}sfai-Erd{\H o}s-S{\'o}s \cite{andrasfai-erdos-sos} theorem gives that every $n$-vertex triangle-free graph with minimum degree greater than $\frac{2}{5}n$ is bipartite, the graph $G$ will be bipartite. 

For convenience, we replace $m-2x$ with $2d$. 
It is straightforward to check that the denominator $\Delta^{m-x-1}(n-\Delta)^{x-1} = \Delta^{2d+x-1}(n-\Delta)^{x-1}$ is maximized for
$$\Delta=\frac{2d+x-1}{2d+2x-2}n.$$
Thus, from (\ref{eq: delta}) we obtain
\[\delta \geq \frac{(d+x-1)^{2d+2x-2}}{2(2d+x-1)^{2d+x-1}(x-1)^{x-1}} n + O(1).\]

Note that it is enough to show that 
\[\frac{(d+x-1)^{2d+2x-2}}{2(2d+x-1)^{2d+x-1}(x-1)^{x-1}} > \frac{2}{5}\]
as then, for large enough~$n$, we have the wanted inequality $\delta > \frac{2}{5}n$.  

Rearranging the terms and using $2d \geq 0$, Bernoulli's inequality and $d^2 \leq \frac{1}{16}(x-1)$ implied by assumptions of the theorem we obtain
\begin{align*}
    & \frac{(d+x-1)^{2d+2x-2}}{2(2d+x-1)^{2d+x-1}(x-1)^{x-1}} \\
    & = \frac{1}{2} \left(1- \frac{d}{2d+x-1}\right)^{2d+x-1}\left(1+ \frac{d}{x-1}\right)^{x-1}\\
    & \geq \frac{1}{2} \left(1- \frac{d}{x-1}\right)^{2d+x-1}\left(1+ \frac{d}{x-1}\right)^{x-1}\\
    & = \frac{1}{2}\left(1-\frac{d}{x-1}\right)^{2d}\left(\left(1-\frac{d}{x-1}\right)\left(1+ \frac{d}{x-1}\right)\right)^{x-1}\\
    & = \frac{1}{2}\left(1-\frac{d}{x-1}\right)^{2d}\left(1-\frac{d^2}{(x-1)^2}\right)^{x-1}\\
    & \geq \frac{1}{2}\left(1-\frac{2d^2}{x-1}\right)\left(1-\frac{d^2}{x-1}\right)\\
    & \geq \frac{1}{2}\left(1-\frac{2}{16}\right)\left(1-\frac{1}{16}\right) \\
    & > \frac{2}{5}
\end{align*}
as needed.
\end{proof}

\begin{proof}[Proof of Theorem \ref{th: linear}]
It is enough to consider rational $\lambda$. Let $d \geq 1$ and $x \geq 3$ be large enough integers such that $\lambda = \frac{2d}{x}$. 
We modify Example~\ref{example: gps}. Consider a graph~$H$ consisting of two stars having $d+1$ leaves with centers connected by a path of length 3 and additional $x-3$ paths of length 2 starting in a central vertex from the aforementioned path, see Figure \ref{fig: counterexample}. 
Note that $H$ has a matching of size $x$ and $2d = \lambda x$ unmatched vertices.
%Let $m=2x+2d$ be the number of vertices in $H$.

\begin{figure}[ht]
\centering
\begin{tikzpicture}[color=best, x=0.6cm,y=0.35cm, line width=0.02cm]
\draw (6,0)--(-4,0);
\draw (0,0)--(-2,2)--(-2,4);
\draw (0,0)--(-1,2)--(-1,4);
\draw (0,0)--(0,2)--(0,4);
\draw (0,0)--(2,2)--(2,4);
\draw [fill=best] (4,0) circle (2pt);
\draw [fill=best] (2,0) circle (2pt);
\draw [fill=best] (0,0) circle (2pt);
\draw [fill=best] (-2,0) circle (2pt);

\draw [fill=white] (-4, 0) ellipse (13pt and 8pt);
\draw [fill=white] (6, 0) ellipse (13pt and 8pt);

\draw [fill=best] (0,2) circle (2pt);
\draw [fill=best] (-1,2) circle (2pt);
\draw [fill=best] (-2,2) circle (2pt);
\draw [fill=best] (2,2) circle (2pt);

\draw [fill=best] (0,4) circle (2pt);
\draw [fill=best] (-1,4) circle (2pt);
\draw [fill=best] (-2,4) circle (2pt);
\draw [fill=best] (2,4) circle (2pt);

\node at (1,2) {$\dots$};

\begin{scriptsize}
\node[color=black][scale=1.25]  at (6,0) {$d+1$};
\node[color=black][scale=1.25]  at (-4,0) {$d+1$};
\node[color=black][scale=1.25]  at (0,5) {$x-3$};
\node[color=black, below][scale=1.25]  at (-4,-0.6) {$1$};
\node[color=black, below][scale=1.25]  at (6,-0.6) {$1$};
\node[color=black, below][scale=1.25]  at (4,0) {$5$};
\node[color=black, below][scale=1.25]  at (2,0) {$4$};
\node[color=black, below][scale=1.25]  at (-2,0) {$2$};
\node[color=black, below][scale=1.25]  at (0,0) {$3$};
\node[color=black, left][scale=1.25]  at (-2,2) {$2$};
\node[color=black, left][scale=1.25]  at (-1,2) {$2$};
\node[color=black, left][scale=1.25]  at (0,2) {$2$};
\node[color=black, right][scale=1.25]  at (2,2) {$2$};
\node[color=black, left][scale=1.25]  at (-2,4) {$1$};
\node[color=black, left][scale=1.25]  at (-1,4) {$1$};
\node[color=black, left][scale=1.25]  at (0,4) {$1$};
\node[color=black, right][scale=1.25]  at (2,4) {$1$};
\end{scriptsize}
\end{tikzpicture}
\hspace{2cm}
\begin{tikzpicture}[scale=0.8,color=best,line width=0.03cm,
block/.style={circle, fill=white,draw=best,inner sep=5pt}]
    \foreach \i in {1,...,5}
    {
        \coordinate (\i) at (\i*360/5:1.5);
    }
\draw (1)--(2)--(3)--(4)--(5)--(1);
\foreach \i in {1,...,5}
    {
        \node[block] at (\i*360/5:1.5) {};
    }
\begin{scope}
% TODO: rename these
\node [color=black, label={[label distance=0cm, color=black]0:$1$}] at (1) {$a$}; 
\node [color=black, label={[label distance=-0.1cm, color=black]30:$2$}] at (2) {$b$}; 
\node [color=black, label={[label distance=-0.1cm, color=black]10:$3$}] at (3) {$c$}; 
\node [color=black, label={[label distance=0cm, color=black]0:$4$}] at (4) {$c$}; 
\node [color=black, label={[label distance=0cm, color=black]0:$5$}] at (5) {$c$}; 

\end{scope}

\end{tikzpicture}
\caption{Graphs $H$ and $G$.}
\label{fig: counterexample}
\end{figure}
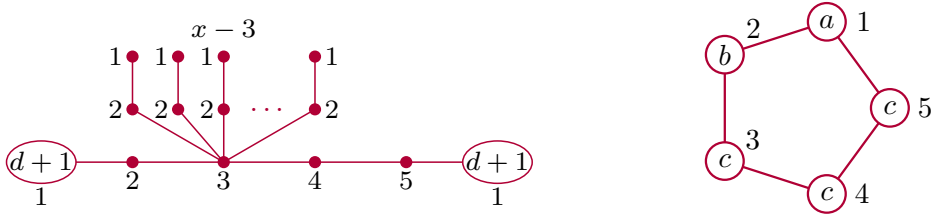

We will show that if $x$ is large enough then $\overline{H(T_2(n))} < \overline{H(G)}$, where $G$ is an unbalanced blow-up of a $C_5$ with parts of sizes $a n, b n, c n, c n, c n$ for large enough $n$ and some values of $a$, $b$, $c$ with $a+b+3c=1$ to be established later. 
Since $H$ is balanced, $T_2(n)$ maximizes the number of injective embeddings of~$H$ among all bipartite graphs, and so the inequality $\overline{H(T_2(n))} < \overline{H(G)}$ implies that no bipartite graph on $n$~vertices contains more copies of $H$ than $G$.

Since $H$ is a connected balanced bipartite graph, $\overline{H(T_2(n))} \leq 2(\frac{1}{2})^{2x+\lambda x}n^{2x+\lambda x}$. 

On the other hand, to lower bound $\overline{H(G)}$ we count the number of embeddings where each vertex of $H$ is embedded to a specific blob of $G$ marked in Figure~\ref{fig: counterexample}
\begin{align*}
\overline{H(G)} & \geq a^{x+\lambda x-1} b^{x-2} c^3 n^{2x+\lambda x} + O(n^{2x+\lambda x-1}) \\
& = a^{x+\lambda x-1} (1-a-3c)^{x-2} c^3 n^{2x+\lambda x} + O(n^{2x+\lambda x-1}).
\end{align*} 

Note that for arbitrary $\lambda > 0$ it is possible to choose $a>\frac{1}{2}$ such that
$$a^{\lambda + 1} (1 - a) - \left( \frac{1}{2} \right)^{\lambda + 2} > 0.$$
This is due to the fact that the function $f(z) = z^{\lambda+1} (1-z) - \left(\frac{1}{2}\right)^{2+\lambda}$ has a root in $\frac{1}{2}$ and its derivative at $\frac{1}{2}$ is greater than zero. % Indeed, $f'(x)= (2+k)x^{k+1}(1-x)^{k}-kx^{2+k}(1-x)^{k-1}$. Then $f'(\frac{1}{2})=(2+\lambda) \frac{1}{2}^{2\lambda+1} - \lambda \frac{1}{2}^{2\lambda+1}>0$. Thus, we can choose $\varepsilon_1$ slightly greater than $\frac{1}{2}$ so that $f(\varepsilon_1)>0$. 

Now consider the function $g(z)=a^{\lambda + 1} (1-a-3z)-\left(\frac{1}{2}\right)^{2+\lambda}$. It is continuous at $z=0$ and $g(0)=f(a)>0$. Thus, we can choose small $c>0$ so that $g(c)>0$. For such $c$ define
$$
p = \frac{a^{\lambda + 1} (1 - a - 3c)}{\left( \frac{1}{2} \right)^{\lambda + 2}} > 1
$$
and take $x > \log_{p}\left(\frac{2 a (1-a-3c)^2}{c^3}\right)$. Then
$$\frac{a^{x + \lambda x} (1-a-3c)^{x}}{\left(\frac{1}{2}\right)^{2x+2\lambda}} > \frac{2 a (1-a-3c)^2}{c^3},$$
which implies
$$a^{x + \lambda x-1} (1-a-3c)^{x-2}c^3 > 2\left(\frac{1}{2}\right)^{2x+2\lambda}.$$
This means that for large enough $n$ we have 
$\overline{H(G)} \geq \overline{H(T_2(n))}$ as needed.
\end{proof}

	\bibliographystyle{abbrv}
	\bibliography{bib_source}

\end{document}